\newtheorem{thm}{Theorem}[section]
\newtheorem{prob}{Problem}
\newtheorem{lem}[thm]{Lemma}
\theoremstyle{definition}
\begin{document}
\title{The second largest eigenvalues of some Cayley graphs on alternating groups\footnote{This work is supported by the National Natural Science Foundation of China (Grant nos. 11531011, 11671344).}}
\author{{\small Xueyi Huang, \ \ Qiongxiang Huang\footnote{
Corresponding author.}\setcounter{footnote}{-1}\footnote{
\emph{E-mail address:} huangxymath@gmail.com (X. Huang), huangqx@xju.edu.cn (Q. Huang).}}\\[2mm]\scriptsize
College of Mathematics and Systems Science,
\scriptsize Xinjiang University, Urumqi, Xinjiang 830046, P. R. China}
\date{}
\maketitle
{\flushleft\large\bf Abstract}
Let $A_n$ denote the alternating group of degree $n$ with $n\geq 3$. The alternating group graph $AG_n$, extended alternating group graph $EAG_n$ and  complete alternating group graph $CAG_n$ are the Cayley graphs $\mathrm{Cay}(A_n,T_1)$, $\mathrm{Cay}(A_n,T_2)$ and $\mathrm{Cay}(A_n,T_3)$, respectively, where $T_1=\{(1,2,i),(1,i,2)\mid 3\leq i\leq n\}$, $T_2=\{(1,i,j),(1,j,i)\mid 2\leq i<j\leq n\}$ and $T_3=\{(i,j,k),(i,k,j)\mid 1\leq i<j<k\leq n\}$.  In this paper,  we determine the second largest eigenvalues of  $AG_n$, $EAG_n$ and  $CAG_n$. 

\begin{flushleft}
\textbf{Keywords:} Alternating group graph; Cayley graph; Second largest eigenvalue
\end{flushleft}
\textbf{AMS Classification:} 05C50

\section{Introduction}\label{s-1}
Let $G=(V(G),E(G))$ be a simple undirected graph of order $n$.  The \emph{adjacency matrix} of $G$, denoted by $A(G)$, is the $n\times n$ matrix with entries $a_{uv}=1$ if $\{u,v\}\in E(G)$ and $a_{uv}=0$ otherwise. The eigenvalues of $A(G)$ are denoted by $\lambda_1(G)\geq \lambda_2(G)\geq \cdots\geq \lambda_n(G)$, which are also called the \emph{eigenvalues} of  $G$. 

For $v\in V(G)$, we denote by $N(v)=\{u\in V(G)\mid \{u,v\}\in E(G)\}$ and $d(v)=|N(v)|$ the \emph{neighborhood}  and \emph{degree} of $v$, respectively. Let $D(G)=\mathrm{diag}(d(v)\mid v\in V(G))$ denote the diagonal degree  matrix of $G$. The \emph{Laplacian matrix} of $G$ is defined as $L(G)=D(G)-A(G)$, which is positive semi-definite and always has $0$ as its eigenvalue. So the eigenvalues of $L(G)$ can be arranged as $\mu_1(G)\geq \mu_2(G)\geq \cdots\geq \mu_{n-1}(G)\geq \mu_n(G)=0$. Furthermore, $\mu_{n-1}(G)>0$ if and only if $G$ is connected, and for connected non-complete  graphs we know that $0<\mu_{n-1}(G)\leq\kappa(G)\leq\kappa'(G)\leq \delta(G)$, where $\kappa(G)$, $\kappa'(G)$ and $\delta(G)$ denote the vertex connectivity, edge connectivity and minimum degree of $G$, respectively  (cf. \cite{Cvetkovic}, Corollary 7.4.6). For this reason, $\mu_{n-1}(G)$ is  called the \emph{algebraic connectivity} of $G$. Clearly, if $G$ is $r$-regular then $\mu_{n-1}(G)=r-\lambda_2(G)$.

For a finite group $\Gamma$, and a subset $T\subseteq \Gamma\setminus\{e\}$ ($e$ is the identity element of $\Gamma$) such that $T=T^{-1}$, the \emph{Cayley graph} $\mathrm{Cay}(\Gamma,T)$ on $\Gamma$ with respect to $T$ is defined as the undirected graph with vertex set $\Gamma$ and  edge set $\{\{\gamma,t\gamma \}\mid\gamma\in\Gamma,t\in T\}$. Clearly,  $\mathrm{Cay}(\Gamma,T)$ is a $|T|$-regular graph which is connected if and only if $T$ is a generating  subset of  $\Gamma$.

Let  $S_n$ be the symmetric group of degree $n\geq 3$ (for $\sigma,\tau\in S_n$, the product $\sigma\tau$ means that we first apply permutation $\sigma$ and then apply permutation $\tau$, and $\tau_i$ represents the image of $i$ under the permuation $\tau$), and let $T$ be a set of transpositions in $S_n$. The \emph{transposition graph} $\mathrm{Tra}(T)$ of $T$  is defined as the graph with vertex set $\{1,2,\ldots,n\}$ and with an edge connecting two vertices $i$ and $j$ if and only if $(i,j)\in T$. It is known that $T$ can generate $S_n$ if and only if  $\mathrm{Tra}(T)$ is connected \cite{Godsil1}. Around 1992,  Aldous \cite{Aldous} (see also \cite{Friedman,Cesi}) conjectured that if $\mathrm{Tra}(T)$ is connected then the second largest eigenvalue of $\mathrm{Cay}(S_n,T)$ is equal to $|T|-\mu_{n-1}(\mathrm{Tra}(T))$, that is, $\mathrm{Cay}(S_n,T)$ and $\mathrm{Tra}(T)$ share the same algebraic connectivity. Before 1992, Flatto et al.\cite{Flatto}  and Diaconis and Shahshahani \cite{Diaconis} had confirmed the conjecture for  $\mathrm{Tra}(T)=K_{1,n-1}$ and $\mathrm{Tra}(T)=K_{n}$, respectively.  In 2000, Friedman \cite{Friedman} proved that if $\mathrm{Tra}(T)$ is a tree ($|T|=n-1$) then the second largest eigenvalue of $\mathrm{Cay}(S_n,T)$ is at least $n-2$, and  exactly equal to $n-2$ if and only if $\mathrm{Tra}(T)=K_{1,n-1}$.  In 2010, Cesi \cite{Cesi} confirmed Aldous' conjecture when $\mathrm{Tra}(T)$ is a complete multipartite graph.  Almost at the same time, Caputo et al. \cite{Caputo} completely solved the conjecture for all connected $\mathrm{Tra}(T)$. For other Cayley graphs on symmetric groups, Cesi \cite{Cesi1} proved that the second largest eigenvalue of the pancake graph $\mathcal{P}_n=\mathrm{Cay}(S_n,S)$  ($S=\{(1,i)(2,i-1)\cdots\mid 2\leq i\leq n\}$) is equal to $n-2$; and very recently, Chung and Tobin \cite{Chung} determined the second largest eigenvalues of the reversal graph $R_n=\mathrm{Cay}(S_n,S)$ ($S=\{(i,j)(i+1,j-1)\cdots\mid1\leq i<j\leq n\}$) and a family of graphs that generalize the pancake graph. Their method is very imaginative, which depends on finding enough equitable partitions of $R_n$ and decomposing the edge set $E(R_n)$ to that of $\mathcal{P}_{n}$ and $n$ copies of $R_{n-1}$.

Now we consider the  Cayley graphs on alternating group $A_n$ ($n\geq 3$). Let $T_1=\{(1,2,i),(1,i,2)\mid 3\leq i\leq n\}$, $T_2=\{(1,i,j),(1,j,i)\mid 2\leq i<j\leq n\}$ and $T_3=\{(i,j,k),(i,k,j)\mid 1\leq i<j<k\leq n\}$. The \emph{alternating group graph} $AG_n$, \emph{extended alternating group graph} $EAG_n$ and  \emph{complete alternating group graph} $CAG_n$ are defined as the Cayley graphs $\mathrm{Cay}(A_n,T_1)$, $\mathrm{Cay}(A_n,T_2)$ and $\mathrm{Cay}(A_n,T_3)$, respectively. Since $T_1\subseteq T_2\subseteq T_3$,  $AG_n$ is a spanning subgraph of $EAG_n$ while $EAG_n$ is also a spanning subgraph of $CAG_n$, and all these graphs are connected because $T_1$ can generate  $A_n$ (cf. \cite{Suzuki}, p. 298).  In \cite{Jwo}, Jwo et al. introduced the alternating group graph $AG_n$ as an interconnection network topology for computing systems, and they also showed that $AG_n$ is $2$-transitive, Hamiltonian and of diameter $\lfloor 3(n-2)/2\rfloor$.  Following this paper, $AG_n$ was extensively studied over decades. For example, Chang and Yang \cite{Chang,Chang1}  investigated the panconnectivity, Hamiltonian-connectivity and  fault-tolerant Hamiltonicity of $AG_n$. Zhou \cite{Zhou} determined the full automorphism group of $AG_n$. In addition, the complete alternating group graph $CAG_n$ can be viewed as one of the two isomorphic connected components of the $(n-3)$-point fixing graph $\mathcal{F}(n,n-3)$ defined in \cite{Cheng}, which has only integral eigenvalues (cf. \cite{Cheng}, Corollary 1.2). Recently, the full automorphism group of $CAG_n$ was determined by Huang and Huang \cite{Huang}. 

Inspired by the work of Chung and Tobin \cite{Chung},  in the present paper, we determine the second largest eigenvalues of $AG_n$, $EAG_n$ and $CAG_n$, which are equal to $2n-6$ (for $n\geq 4$), $n^2-5n+5$ and $\frac{1}{3}n(n-2)(n-4)$, respectively. 

The paper is organized as follows. In Section \ref{s-2}, we obtain the second largest eigenvalue of $AG_n$ by induction on $n$.  In Section \ref{s-3}, we focus on determining the second largest eigenvalue of $CAG_n$. In order to achieve this goal by induction on $n$, we decompose the edge set of $CAG_n$ into that of $EAG_n$ and $n$ copies of $CAG_{n-1}$, and use $\lambda_2(EAG_n)+\lambda_2(CAG_{n-1})$ to bound $\lambda_2(CAG_n)$. Thus we have to determine $\lambda_2(EAG_n)$, and for this reason, we  also decompose the edge set of $EAG_n$ into that of $AG_n$ and $n$ copies of $EAG_{n-1}$, and use $\lambda_2(AG_n)+\lambda_2(EAG_{n-1})$ to bound $\lambda_2(EAG_n)$ as $\lambda_2(AG_n)$ has been determined in Section \ref{s-2}.

\section{The second largest eigenvalue of $AG_n$}\label{s-2}

In this section, we focus on determining the second largest eigenvalue of $AG_n$. In order to obtain enough information about the eigenvalues of $AG_n$, we  need a well known result on equitable partition. 

Let  $G$ be a graph of order $n$, and let $\Pi:V(G)=V_1\cup V_2\cup\cdots\cup V_k$ be a vertex partition of $G$.  The \emph{characteristic matrix} $\chi_\Pi$ of $\Pi$  is the $n\times k$ matrix whose columns are the character vectors of $V_1,\ldots,V_k$. The partition $\Pi$ is called an  \emph{equitable partition} of $G$ if, for any $v\in V_i$, $|N(v)\cap V_j|=b_{ij}$ is a constant only dependent on $i,j$ ($1\le i,j\le k$). The matrix $B_\Pi=(b_{ij})_{k\times k}$ is called the \emph{divisor matrix} of $G$ with respect to $\Pi$. 

\begin{lem}[\cite{Godsil1}]\label{lem-2-1}
Let $G$ be a graph with adjacency matrix $A(G)$, and let $\Pi:V(G)=V_1\cup V_2\cup\cdots\cup V_k$ be an equitable partition of $G$ with divisor matrix $B_{\Pi}$. Then each eigenvalue of $B_\Pi$ is also an eigenvalue of $A(G)$. Furthermore, $A(G)$ has the following two kinds of eigenvectors:
\begin{enumerate}[(i)]
\vspace{-0.2cm}
\item the eigenvectors in the column space of $\chi_\Pi$, and the corresponding eigenvalues coincide
with the eigenvalues of $B_\Pi$;
\vspace{-0.2cm}
\item the eigenvectors orthogonal to the columns of $\chi_\Pi$, i.e., those eigenvectors that sum to zero on each block $V_i$ for $1\leq i\leq k$.
\end{enumerate}
\end{lem}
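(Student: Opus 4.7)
The whole lemma hinges on a single matrix identity, so my plan is to establish that identity first and then read off both conclusions from it. Specifically, I would prove that
\[
A(G)\,\chi_\Pi \;=\; \chi_\Pi\, B_\Pi.
\]
This is a direct entry-wise check. Fix a vertex $v\in V(G)$ with $v\in V_i$, and a block index $j$. The $(v,j)$-entry of $A(G)\chi_\Pi$ is $\sum_{u\in V_j}a_{vu}=|N(v)\cap V_j|$, which equals $b_{ij}$ by the definition of an equitable partition. On the other hand, the $v$-th row of $\chi_\Pi$ is the standard basis vector $e_i^{\mathsf T}$, so the $(v,j)$-entry of $\chi_\Pi B_\Pi$ is $b_{ij}$ as well. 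This yields the identity.

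Given the identity, part (i) is immediate. If $B_\Pi x=\lambda x$ for a nonzero $x\in\mathbb R^k$, then
\[
A(G)(\chi_\Pi x)=\chi_\Pi B_\Pi x=\lambda\,\chi_\Pi x,
\]
and $\chi_\Pi x\neq 0$ because the columns of $\chi_\Pi$ are linearly independent (they are the indicator vectors of a disjoint partition into nonempty blocks, hence have disjoint supports). Thus every eigenvalue of $B_\Pi$ is an eigenvalue of $A(G)$, and the associated eigenvectors lie in $\mathrm{col}(\chi_\Pi)$. Conversely, any eigenvector of $A(G)$ that lies in $\mathrm{col}(\chi_\Pi)$, say $\chi_\Pi y$, satisfies $\chi_\Pi B_\Pi y=A(G)\chi_\Pi y=\lambda \chi_\Pi y$, and since $\chi_\Pi$ has trivial kernel this forces $B_\Pi y=\lambda y$.

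For part (ii) I would use the fact that $A(G)$ is symmetric. The identity $A(G)\chi_\Pi=\chi_\Pi B_\Pi$ says the subspace $U=\mathrm{col}(\chi_\Pi)$ is $A(G)$-invariant; by symmetry of $A(G)$, the orthogonal complement $U^\perp$ is also $A(G)$-invariant. Consequently $\mathbb R^{|V(G)|}$ admits an orthogonal decomposition into eigenvectors of $A(G)$ that lie in $U$ (accounted for by part (i)) and eigenvectors lying in $U^\perp$. A vector $z$ is in $U^\perp$ precisely when $\chi_\Pi^{\mathsf T} z=0$, i.e.\ when $z$ sums to zero on each block $V_i$, which is exactly the condition in (ii).

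\textbf{Main obstacle.} There is no serious obstacle here; the result is standard. The only points requiring care are (a) verifying that $\chi_\Pi$ has full column rank so that the correspondence $x\mapsto\chi_\Pi x$ is injective and eigenvalues of $B_\Pi$ are not lost, and (b) invoking symmetry of $A(G)$ to pass from invariance of $U$ to invariance of $U^\perp$. Both are routine, so I would expect the proof to be short and essentially a direct verification of the key identity $A(G)\chi_\Pi=\chi_\Pi B_\Pi$.
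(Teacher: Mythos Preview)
Your proof is correct and is essentially the standard textbook argument. Note, however, that the paper does not prove this lemma at all: it is quoted from Godsil--Royle \cite{Godsil1} and stated without proof, so there is no ``paper's own proof'' to compare against. Your argument via the intertwining identity $A(G)\chi_\Pi=\chi_\Pi B_\Pi$, together with the symmetry of $A(G)$ to obtain invariance of $\mathrm{col}(\chi_\Pi)^\perp$, is exactly the approach one finds in that reference.
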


Recall that $AG_n=\mathrm{Cay}(A_n,T_1)$, where $T_1=\{(1,2,i),(1,i,2)\mid 3\leq i\leq n\}$. Clearly, $AG_n$ is $(2n-4)$-regular. If $n=3$, then $AG_n=K_3$, which has $-1$ as its second largest eigenvalue. Now suppose $n\geq 4$. For each fixed $i$ ($1\leq i\leq n$), we define
\begin{align}\label{equ-1}
\begin{aligned}
&X(i)=\{\tau\in A_n\mid\tau_n=i\},\\
&Y(i)=\{\tau\in A_n\mid\tau_1=i\},\\
&Z(i)=\{\tau\in A_n\mid\tau_2=i\},\\
&W(i)=\{\tau\in A_n\mid\tau_1\neq i,\tau_2\neq i,\tau_n\neq i\}.
\end{aligned}
\end{align}
Now we verify that $\Pi: A_n=X(i)\cup Y(i) \cup Z(i) \cup W(i)$ is exactly an equitable partition of $AG_n$. For each $\tau\in X(i)$, we have $N(\tau)\cap X(i)=\{(1,2,k)\tau,(1,k,2)\tau\mid 3\leq k\leq n-1\}$, $N(\tau)\cap Y(i)=\{(1,n,2)\tau\}$, $N(\tau)\cap Z(i)=\{(1,2,n)\tau\}$ and $N(\tau)\cap W(i)=\emptyset$. This implies that each $\tau\in X(i)$ has exactly $2n-6$ neighbors in $X(i)$, one neighbor in $Y(i)$, one neighbor in $Z(i)$ and no neighbors in $W(i)$.  Similarly, for each $\tau\in Y(i)$, we have $N(\tau)\cap X(i)=\{(1,2,n)\tau\}$, $N(\tau)\cap Y(i)=\emptyset$, $N(\tau)\cap Z(i)=\{(1,k,2)\tau\mid 3\leq k\leq n\}$ and $N(\tau)\cap W(i)=\{(1,2,k)\tau\mid 3\leq k\leq n-1\}$, and for each $\tau\in Z(i)$, we have $N(\tau)\cap X(i)=\{(1,n,2)\tau\}$, $N(\tau)\cap Y(i)=\{(1,2,k)\tau\mid 3\leq k\leq n\}$, $N(\tau)\cap Z(i)=\emptyset$ and $N(\tau)\cap W(i)=\{(1,k,2)\tau\mid 3\leq k\leq n-1\}$. For each $\tau\in W(i)$, we know that  $\tau_\ell=i$ for some $3\leq \ell\leq n-1$ because $\tau_1,\tau_2,\tau_n\neq i$. Then $N(\tau)\cap X(i)=\emptyset$, $N(\tau)\cap Y(i)=\{(1,\ell,2)\tau\}$, $N(\tau)\cap Z(i)=\{(1,2,\ell)\tau\}$ and $N(\tau)\cap W(i)=\{(1,2,k)\tau,(1,k,2)\tau\mid 3\leq k\leq n, k\neq \ell\}$. Therefore, the divisor matrix of $AG_n$ with respect to $\Pi$ is equal to 
$$B_\Pi=
\left[
\begin{matrix}
2n-6 &1& 1& 0\\
1&0&n-2&n-3\\
1&n-2&0&n-3\\
0&1&1&2n-6
\end{matrix}
\right].
$$
By simple computation, we know that $B_\Pi$ has eigenvalues $2n-4$, $2n-6$, $n-4$ and $2-n$. Thus, by Lemma \ref{lem-2-1}, we have
\begin{lem}\label{lem-2-2}
Let $AG_n$ be the alternating group graph with $n\geq 4$. Then $AG_n$ has eigenvalues $2n-4$, $2n-6$, $n-4$ and $2-n$. Moreover, if $f$ is an eigenvector of $AG_n$ corresponding to any other eigenvalue, then for each $i\in\{1,2,\ldots,n\}$, we have $$\sum_{x\in X(i)}f(x)=\sum_{y\in Y(i)}f(y)=\sum_{z\in Z(i)}f(z)=\sum_{w\in W(i)}f(w)=0,$$ where $X(i)$, $Y(i)$, $Z(i)$ and $W(i)$ are defined in (\ref{equ-1}). 
\end{lem}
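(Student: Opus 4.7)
The plan is to deduce both halves of the lemma directly from Lemma \ref{lem-2-1} applied to the equitable partitions already assembled above the statement.

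For the list of eigenvalues, the $4\times 4$ divisor matrix $B_\Pi$ is in hand, and its spectrum can be read off quickly: $(1,1,1,1)^T$ is an eigenvector with eigenvalue $2n-4$ (a reflection of $(2n-4)$-regularity), the antisymmetric vector $(0,1,-1,0)^T$ gives eigenvalue $2-n$ (using the $Y(i)\leftrightarrow Z(i)$ symmetry of $B_\Pi$), and the remaining two eigenvalues are then forced to be $2n-6$ and $n-4$ by trace and determinant (or a $2\times 2$ computation on the symmetric complement). Lemma \ref{lem-2-1}(i) immediately supplies $\{2n-4,2n-6,n-4,2-n\}$ as eigenvalues of $A(AG_n)$.

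For the orthogonality claim, the key observation is that the case analysis verifying $\Pi$ is equitable was carried out for an \emph{arbitrary} fixed $i\in\{1,\ldots,n\}$, and therefore in fact yields $n$ equitable partitions $\Pi_i:A_n=X(i)\cup Y(i)\cup Z(i)\cup W(i)$, all sharing the same divisor matrix $B_\Pi$. Fix an eigenvector $f$ with eigenvalue $\lambda\notin\{2n-4,2n-6,n-4,2-n\}$. I will argue that $f$ is of type (ii) in Lemma \ref{lem-2-1} relative to every $\Pi_i$: since $\Pi_i$ is equitable, the column space $\mathrm{col}(\chi_{\Pi_i})$ and its orthogonal complement are both $A(AG_n)$-invariant, so the orthogonal decomposition $f=f^{\parallel}+f^{\perp}$ has both summands in the $\lambda$-eigenspace; since $\lambda$ is not an eigenvalue of $B_\Pi$, Lemma \ref{lem-2-1}(i) forces $f^{\parallel}=0$. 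Consequently $f$ is orthogonal to each of $\chi_{X(i)},\chi_{Y(i)},\chi_{Z(i)},\chi_{W(i)}$, which is exactly the stated vanishing of the four sums. Letting $i$ range over $\{1,\ldots,n\}$ finishes the lemma.

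The only substantive step in the plan is the invariance/decomposition argument used to extract ``$f$ is orthogonal to the columns of $\chi_{\Pi_i}$'' from the phrasing of Lemma \ref{lem-2-1}; the rest — equitability of each $\Pi_i$ and the eigenvalue calculation for $B_\Pi$ — has effectively already been done in the paragraph preceding the statement.
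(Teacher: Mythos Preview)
Your proposal is correct and is essentially the paper's own argument: the paper derives the four eigenvalues from the divisor matrix $B_\Pi$ and then invokes Lemma~\ref{lem-2-1} directly, while you add the explicit $f=f^{\parallel}+f^{\perp}$ decomposition and the observation that the same $\Pi$ works for every $i$, both of which the paper leaves implicit.
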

Now we are in a position to prove the main result of this section.
\begin{thm}\label{thm-1}
Let $AG_n$ be the alternating group graph with $n\geq 4$. Then the two largest eigenvalues of $AG_n$ are $\lambda_1(AG_n)=2n-4$ and $\lambda_2(AG_n)=2n-6$. 
\end{thm}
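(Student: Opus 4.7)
My plan is to combine Lemma \ref{lem-2-2}, which already identifies $2n-4$ as the Perron eigenvalue and $2n-6$ as an eigenvalue of $AG_n$, with an induction on $n$ to rule out any eigenvalue strictly between $2n-6$ and $2n-4$. Since $AG_n$ is $(2n-4)$-regular and connected, $\lambda_1(AG_n)=2n-4$ is simple and achieved by the all-ones vector; the substantive task is therefore to show $\lambda_2(AG_n)\leq 2n-6$, which I would prove by induction on $n$ starting from the base $AG_3=K_3$ (spectrum $\{2,-1,-1\}$).

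For the inductive step I would split $T_1=T_1'\cup\{(1,2,n),(1,n,2)\}$, where $T_1'=\{(1,2,k),(1,k,2):3\leq k\leq n-1\}$ consists precisely of the generators that fix the letter $n$. This induces a decomposition $A(AG_n)=A'+B$ in which $A'$ is block-diagonal with one block per $i\in\{1,\dots,n\}$; after the right-coset identification $X(i)\cong A_{n-1}$, each block realises the adjacency matrix of $AG_{n-1}$. The remaining piece $B=A_{(1,2,n)}+A_{(1,n,2)}$ is, since $(1,2,n)$ has order $3$ and acts freely on $A_n$, a disjoint union of $n!/6$ triangles (one per orbit of $\langle(1,2,n)\rangle$), so $\lambda_{\max}(B)=2$.

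Now let $f$ be an eigenvector of $AG_n$ whose eigenvalue $\lambda$ is not in $\{2n-4,2n-6,n-4,2-n\}$. Lemma \ref{lem-2-2} forces $\sum_{\tau\in X(i)}f(\tau)=0$ for every $i$, so each restriction $f|_{X(i)}$ is orthogonal to the constant vector on its block, and the induction hypothesis $\lambda_2(AG_{n-1})\leq 2n-8$ gives
\[
\langle A'f,f\rangle=\sum_{i=1}^{n}\langle A(AG_{n-1})\,f|_{X(i)},f|_{X(i)}\rangle\leq (2n-8)\|f\|^2.
\]
Combining this with $\langle Bf,f\rangle\leq 2\|f\|^2$ yields $\lambda\|f\|^2\leq(2n-6)\|f\|^2$. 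Since the remaining divisor-matrix candidates $\{2n-6,n-4,2-n\}$ all sit at or below $2n-6$, I get $\lambda_2(AG_n)\leq 2n-6$, matching the lower bound supplied by Lemma \ref{lem-2-2}.

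The step I expect to require the most care is the identification of the subgraph induced on each $X(i)$ with $AG_{n-1}$: one must check that the generators in $T_1'$ preserve $X(i)$ and that, after the bijection $\tau\mapsto\tau\sigma_i^{-1}$ with a chosen representative $\sigma_i\in X(i)$, they act as the standard generating set $T_1$ of $A_{n-1}$. Once this identification is in place, the spectral analysis of $B$ and the base case $n=4$ (where $\lambda_2(AG_3)=-1$ delivers $\lambda\leq 1<2$) are routine.
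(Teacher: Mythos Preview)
Your argument is correct, and it is genuinely different from---and cleaner than---the paper's own proof. The paper works on a \emph{single} well-chosen block $X(i)$: it writes the Rayleigh quotient of $f$ restricted to $X(i)$ as an ``internal'' part plus a ``boundary'' part coming from the two matchings into $Y(i)$ and $Z(i)$, and then controls the boundary part with Cauchy--Schwarz together with an averaging claim (their Claim~1) guaranteeing that some $i$ satisfies $2\sum_{X(i)}f^2\ge\sum_{Y(i)}f^2+\sum_{Z(i)}f^2>0$. You instead use \emph{all} blocks $X(i)$ at once via the global decomposition $A(AG_n)=A'+B$, where $A'$ is the block-diagonal piece coming from $T_1'$ (each block $\cong AG_{n-1}$) and $B=\mathrm{Cay}(A_n,\{(1,2,n),(1,n,2)\})$ is a disjoint union of triangles with $\lambda_{\max}(B)=2$. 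Since Lemma~\ref{lem-2-2} forces $f|_{X(i)}\perp\mathbf{1}$ for every $i$, you get $\langle A'f,f\rangle\le(2n-8)\|f\|^2$ without any averaging or Cauchy--Schwarz, and $\langle Bf,f\rangle\le 2\|f\|^2$ is immediate.

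What each approach buys: your decomposition is shorter and, interestingly, is exactly the strategy the paper itself adopts later for $EAG_n$ and $CAG_n$ (splitting off a subgraph whose $\lambda_2$ is already known and inducting on the block pieces); you have in effect noticed that this method already suffices for $AG_n$. The paper's single-block argument, on the other hand, is closer in spirit to the Chung--Tobin technique cited as motivation and does not require recognising the structure of the leftover graph $B$. Two small points of presentation: the induction really has to start at $n=4$ since $\lambda_2(AG_3)=-1\neq 2\cdot 3-6$, so state the base case as you do at the end (using $\lambda_2(AG_3)=-1$ inside the step for $n=4$) rather than as ``starting from $AG_3$''; and the identification $AG_n[X(i)]\cong AG_{n-1}$, which you flag, is verified explicitly in the paper via the map $\phi$.
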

\begin{proof}
Clearly, the largest eigenvalue of $AG_n$ is $\lambda_1(AG_n)=2n-4$ because $AG_n$ is $(2n-4)$-regular. We shall prove  $\lambda_2(AG_n)=2n-6$ by induction on $n$. If $n=4$,  the result follows since all the distinct eigenvalues of $AG_4$ are $4$, $2$, $0$ and $-2$.  Now suppose $n>4$, and assume that the result holds for $n-1$, that is, $\lambda_2(AG_{n-1})=2(n-1)-6=2n-8$. 

Let $\lambda$ be an eigenvalue of $AG_n$ other than $2n-4$, $2n-6$, $n-4$ and $2-n$. It suffices to show $\lambda<2n-6$. Take any fixed eigenvector $f$ of $AG_n$ corresponding to $\lambda$. We have the following claim.

\vspace{0.2cm}
\noindent\textbf{Claim 1.} There exists an $i$ such that 
$$
2\sum_{x\in X(i)}f(x)^2\geq \sum_{y\in Y(i)}f(y)^2+\sum_{z\in Z(i)}f(z)^2
$$
and
$$
\sum_{x\in X(i)}f(x)^2>0.
$$
\begin{proof}
Notice that the vertex set of $AG_n$ can be partitioned in the following ways: $A_n=X(1)\cup X(2)\cup\cdots\cup X(n)=Y(1)\cup Y(2)\cup \cdots\cup Y(n)=Z(1)\cup Z(2)\cup \cdots\cup Z(n)$. Thus we have 
$$
\sum_{j=1}^n\sum_{x\in X(j)}f(x)^2=\sum_{j=1}^n\sum_{y\in Y(j)}f(y)^2=\sum_{j=1}^n\sum_{z\in Z(j)}f(z)^2>0,
$$ 
and hence
$$
\sum_{j=1}^n2\sum_{x\in X(j)}f(x)^2=\sum_{j=1}^n\left(\sum_{y\in Y(j)}f(y)^2+\sum_{z\in Z(j)}f(z)^2\right)>0.
$$ 
Therefore, there exists an index $i$ such that 
$$
2\sum_{x\in X(i)}f(x)^2\geq \sum_{y\in Y(i)}f(y)^2+\sum_{z\in Z(i)}f(z)^2.
$$ 
Let $I$ be the set of indices $i$ satisfying this inequality. Then we claim that there exists some $i\in I$ satisfying 
$$
\sum_{x\in X(i)}f(x)^2>0
$$
because $f\neq 0$.
\end{proof}
Now fix an $i$ satisfying the inequalities in Claim 1, and consider an arbitrary vertex $x\in X(i)$. As noted above, $x$ has $2n-4$ neighbors in $AG_n$, in which $2n-6$ neighbors in $X(i)$, one neighbor in $Y(i)$ and one neighbor in $Z(i)$. Observe that the induced subgraph of $AG_n$ on $X(i)$ is isomorphic to $AG_{n-1}=\mathrm{Cay}(A_{n-1},T_1')$, where $T_1'=\{(1,2,k),(1,k,2)\mid{3\leq k\leq n-1}\}$. In fact, if $i=n$ then $AG_n[X(i)]$ is obviously isomorphic to $AG_{n-1}$ since $\tau_n=n$ for each $\tau \in X(n)$; if $i<n$, we define 
\begin{eqnarray*}
\phi:X(i)&\longrightarrow &A_{n-1}\\
\tau=\left(\begin{matrix}
1&2&\cdots&l&\cdots& n-1&n\\
\tau_1&\tau_2&\cdots&n&\cdots& \tau_{n-1}&i\\
\end{matrix}\right)&\longmapsto&
\left(\begin{matrix}
1&2&\cdots&l&\cdots& n-1\\
\tau_1&\tau_2&\cdots&i&\cdots& \tau_{n-1}\\
\end{matrix}\right)
\end{eqnarray*}
It is easy to check that $\phi$ is one-to-one and onto. For any two distinct vertices  $\tau=\left(\begin{matrix}
1&2&\cdots&l&\cdots& n-1&n\\
\tau_1&\tau_2&\cdots&n&\cdots& \tau_{n-1}&i\\
\end{matrix}\right)$ and $\tau'=\left(\begin{matrix}
1&2&\cdots&m&\cdots& n-1&n\\
\tau_1'&\tau_2'&\cdots&n&\cdots& \tau_{n-1}'&i\\
\end{matrix}\right)$ of $X(i)$, we have 
\begin{eqnarray*}
\tau'\tau^{-1}
&=&\left(\begin{matrix}
1&2&\cdots&m&\cdots& n-1&n\\
\tau_1'&\tau_2'&\cdots&n&\cdots& \tau_{n-1}'&i\\
\end{matrix}\right)\left(\begin{matrix}
\tau_1&\tau_2&\cdots&n&\cdots& \tau_{n-1}&i\\
1&2&\cdots&l&\cdots& n-1&n\\
\end{matrix}\right)\\
&=&\left(\begin{matrix}
1&2&\cdots&m&\cdots& n-1&n\\
*&*&\cdots&l&\cdots& *&n\\
\end{matrix}\right)\\
&=&\left(\begin{matrix}
1&2&\cdots&m&\cdots& n-1\\
*&*&\cdots&l&\cdots& *\\
\end{matrix}\right)\\
&=&\left(\begin{matrix}
1&2&\cdots&m&\cdots& n-1\\
\tau_1'&\tau_2'&\cdots&i&\cdots& \tau_{n-1}'\\
\end{matrix}\right)\left(\begin{matrix}
\tau_1&\tau_2&\cdots&i&\cdots& \tau_{n-1}\\
1&2&\cdots&l&\cdots& n-1\\
\end{matrix}\right)\\
&=&\phi(\tau')\phi(\tau)^{-1}.
\end{eqnarray*}
Then $\{\tau,\tau'\}\in E(G[X_i])$ if and only if $\tau'\tau^{-1}\in T_1'$, which is the case if and only if $\phi(\tau')\phi(\tau)^{-1}\in T_1'$, which is the case if and only if $\{\phi(\tau),\phi(\tau')\}\in E(AG_{n-1})$. Thus $\phi$ is exactly an isomorphism from $G[X(i)]$ to $AG_{n-1}$. Also note that the edges between $X(i)$ and $Y(i)$ (resp. $X(i)$ and $Z(i)$) form a matching. Let $x'$ (resp. $x''$) be the unique neighbor of $x$ in $Y(i)$ (resp. $Z(i)$). By the eigenvalue-eigenvector equation, we have 
$$\lambda f(x)=\sum_{y\in N(x)\cap X(i)}f(y)+f(x')+f(x'')$$
and further,
$$\lambda f(x)^2=\sum_{y\in N(x)\cap X(i)}f(x)f(y)+f(x)(f(x')+f(x'')).$$
Summing both sides over $x\in X(i)$, we obtain 
$$\lambda\sum_{x\in X(i)} f(x)^2=\sum_{x\in X(i)}\sum_{y\in N(x)\cap X(i)}f(x)f(y)+\sum_{x\in X(i)}f(x)(f(x')+f(x'')),$$
which gives that 
\begin{equation}\label{equ-2}
\lambda=\frac{\sum_{x\in X(i)}\sum_{y\in N(x)\cap X(i)}f(x)f(y)}{\sum_{x\in X(i)} f(x)^2}+\frac{\sum_{x\in X(i)}f(x)(f(x')+f(x''))}{\sum_{x\in X(i)} f(x)^2}.
\end{equation}
Here we use the fact $\sum_{x\in X(i)} f(x)^2>0$ by Claim 1. Now we  shall find upper bounds for the two terms in (\ref{equ-2}). Let $G_1=AG_n[X(i)]$. Then $G_1\cong AG_{n-1}$ as mentioned above. Set $g=f|_{X(i)}$. We have $g\perp \mathbf{1}$ since $\sum_{x\in X(i)}f(x)=0$ according to Lemma \ref{lem-2-2}, where $\mathbf{1}$ is the all ones vector, which is also the eigenvector of $\lambda_1(G_1)$. Thus 
\begin{eqnarray*}
\frac{\sum_{x\in X(i)}\sum_{y\in N(x)\cap X(i)}f(x)f(y)}{\sum_{x\in X(i)} f(x)^2}&=&\frac{g^TA(G_1)g}{g^Tg}\\
&\leq &\max_{h\perp \mathbf{1}}\frac{h^TA(G_1)h}{h^Th}\\
&=&\lambda_2(G_1)\\
&=&\lambda_2(AG_{n-1})\\
&=&2n-8.
\end{eqnarray*}
Next consider the second term in (\ref{equ-2}). Recall that the edges between $X(i)$ and $Y(i)$, and $X(i)$ and $Z(i)$ form two matchings. Thus when $x$ range over the vertices of $X(i)$, $x'$ and $x''$ will range over the vertices of $Y(i)$ and $Z(i)$, respectively. Then 
\begin{eqnarray*}
\frac{\sum_{x\in X(i)}f(x)(f(x')+f(x''))}{\sum_{x\in X(i)} f(x)^2}&\leq&\frac{\sqrt{(\sum_{x\in X(i)} f(x)^2)(\sum_{x\in X(i)} (f(x')+f(x''))^2)}}{\sum_{x\in X(i)}f(x)^2}\\
&=&\sqrt{\frac{\sum_{x\in X(i)}(f(x')+f(x''))^2}{\sum_{x\in X(i)}f(x)^2}}\\
&\leq&\sqrt{\frac{\sum_{x\in X(i)}2(f(x')^2+f(x'')^2)}{\sum_{x\in X(i)}f(x)^2}}\\
&=&\sqrt{\frac{2(\sum_{y\in Y(i)}f(y)^2+\sum_{z\in Z(i)}f(z)^2))}{\sum_{x\in X(i)}f(x)^2}}\\
&\leq &\sqrt{\frac{2\cdot 2\sum_{x\in X(i)}f(x)^2}{\sum_{x\in X(i)}f(x)^2}},\\
&=&2,
\end{eqnarray*}
where the first inequality follows from the Cauchy--Schwarz inequality, the second use the fact $(a+b)^2\leq 2(a^2+b^2)$ and the third follows from Claim 1. 

Combing the above two bounds with (\ref{equ-2}) yields
$$
\lambda\leq 2n-8+2=2n-6,
$$
which implies that there are no eigenvalues in $(2n-6,2n-4)$. Therefore, we have $\lambda_2(AG_n)=2n-6$.
\end{proof}

\section{The second largest eigenvalues of $EAG_n$ and $CAG_n$}\label{s-3}
Recall that the complete alternating group graph is defined as $CAG_n=\mathrm{Cay}(A_n,T_3)$, where  $T_3=\{(i,j,k),(i,k,j)\mid 1\leq i<j<k\leq n\}$  is the set of all $3$-cycles in $S_n$. In this section, we mainly focus on determining the second largest eigenvalue of $CAG_n$. The main idea is to find some suitable equitable partition so that we can decompose the edge set of $CAG_n$ into that of $n$ copies of $CAG_{n-1}$ and that of the extended alternating group graph $EAG_{n}=\mathrm{Cay}(A_n,T_2)$, where $T_2=\{(1,i,j),(1,j,i)\mid 2\leq i<j\leq n\}$ is the set of $3$-cycles in $S_n$ moving $1$, which ensures that the way of induction could be used to find the second largest eigenvalue of $CAG_n$. For this reason, we first need to consider the same problem for the graph $EAG_n$, and again, the main method depends on finding some  suitable equitable partition to decompose the edge set of $EAG_n$. 

For $1\leq i,j\leq n$, we define
\begin{equation}\label{equ-3}
X_i(j)=\{\tau\in A_n\mid \tau_j=i\}.
\end{equation}
For any fixed $i$ (resp. $j$), we see that $X_i(1), X_i(2),\ldots,X_i(n)$ (resp. $X_1(j), X_2(j),\ldots,$ $X_n(j)$) partition $A_n$. Now we shall verify that $\Pi: A_n=X_i(1)\cup X_i(2)\cup \cdots\cup X_i(n)$  is an equitable partition of $EAG_n$ for each $i$.   For $\tau\in X_i(1)$, we have $N(\tau)\cap X_i(1)=\emptyset$ since $(\sigma\tau)_1\neq \tau_1=i$ for any $\sigma\in T_2$, and $N(\tau)\cap X_i(j)=\{(1,k,j)\tau\mid 2\leq k\leq n,k\neq j\}$ for $2\leq j\leq n$. Thus each $\tau\in X_i(1)$ has no neighbors in $X_i(1)$ and $n-2$ neighbors in $X_i(j)$ for each $j$ ($2\leq j\leq n$). For each fixed $j$ ($2\leq j\leq n$), let  $\tau\in X_i(j)$, we have $N(\tau)\cap X_i(1)=\{(1,j,k)\tau\mid 2\leq k\leq n,k\neq j\}$, $N(\tau)\cap X_i(j)=\{(1,k,l)\tau,(1,l,k)\tau\mid 2\leq k<l\leq n,k,l\neq j\}$, and $N(\tau)\cap X_i(\ell)=\{(1,\ell,j)\tau\}$ for any $\ell\not\in\{1,j\}$. This implies that $\tau$ has $n-2$ neighbors in $X_i(1)$, $(n-2)(n-3)$ neighbors in $X_i(j)$, and one neighbor in $X_i(\ell)$ for each $\ell\not\in\{1,j\}$. Thus the partition $\Pi$ is an equitable partition of $EAG_n$ with divisor matrix
$$
B_\Pi=
\left[
\begin{matrix}
0&n-2&n-2&\cdots&n-2\\
n-2&(n-2)(n-3)&1&\cdots&1\\
n-2&1&(n-2)(n-3)&\cdots&1\\
\vdots&\vdots&\vdots&\ddots&\vdots\\
n-2&1&1&\cdots&(n-2)(n-3)
\end{matrix}
\right]
\begin{matrix}
X_i(1)\\
X_i(2)\\
X_i(3)\\
\vdots\\
X_i(n)
\end{matrix}
$$
Then the characteristic polynomial of $B_\Pi$ is equal to
\begin{eqnarray*}
|\lambda I_n-B_\Pi|
&=&
\left|
\begin{smallmatrix}
\lambda &-(n-2)&-(n-2)&\cdots&-(n-2)\\
-(n-2)&\lambda-(n-2)(n-3)&-1&\cdots&-1\\
-(n-2)&-1&\lambda-(n-2)(n-3)&\cdots&-1\\
\vdots&\vdots&\vdots&\ddots&\vdots\\
-(n-2)&-1&-1&\cdots&\lambda-(n-2)(n-3)
\end{smallmatrix}
\right|\\
&=&
(\lambda-(n-1)(n-2))\left|
\begin{smallmatrix}
1 &-(n-2)&-(n-2)&\cdots&-(n-2)\\
1&\lambda-(n-2)(n-3)&-1&\cdots&-1\\
1&-1&\lambda-(n-2)(n-3)&\cdots&-1\\
\vdots&\vdots&\vdots&\ddots&\vdots\\
1&-1&-1&\cdots&\lambda-(n-2)(n-3)
\end{smallmatrix}
\right|\\
&=&
(\lambda-(n-1)(n-2))\left|
\begin{smallmatrix}
1 &0&0&\cdots&0\\
1&\lambda-(n-2)(n-4)&n-3&\cdots&n-3\\
1&n-3&\lambda-(n-2)(n-4)&\cdots&n-3\\
\vdots&\vdots&\vdots&\ddots&\vdots\\
1&n-3&n-3&\cdots&\lambda-(n-2)(n-4)
\end{smallmatrix}
\right|\\
&=&
(\lambda-(n-1)(n-2))\left|
\begin{smallmatrix}
\lambda-(n-2)(n-4)&n-3&\cdots&n-3\\
n-3&\lambda-(n-2)(n-4)&\cdots&n-3\\
\vdots&\vdots&\ddots&\vdots\\
n-3&n-3&\cdots&\lambda-(n-2)(n-4)
\end{smallmatrix}
\right|\\
&=&
(\lambda\!-\!(n-1)(n-2))\left|
(\lambda\!-\!(n-2)(n-4))I_{n-1}\!+\!(n-3)(J_{n-1}\!-\!I_{n-1})
\right|\\
&=&
(\lambda-(n-1)(n-2))(\lambda-(n^2-5n+5))^{n-2}(\lambda+(n-2)),
\end{eqnarray*}
where $I_{n-1}$ and $J_{n-1}$ denote the identity matrix and all ones matrix of order $n-1$, respectively. By Lemma \ref{lem-2-1}, we have 
\begin{lem}\label{lem-3-1}
Let $EAG_n$ be the extended alternating group graph with $n\geq 3$. Then $EAG_n$ has eigenvalues $(n-1)(n-2)$, $n^2-5n+5$ (with multiplicity at least $n-2$) and $2-n$. Moreover, if $f$ is an eigenvector of $EAG_n$ corresponding to any other eigenvalue, then for all $i,j\in\{1,2,\ldots,n\}$, we have $$\sum_{x\in X_i(j)}f(x)=0,$$ where $X_i(j)$ is defined in (\ref{equ-3}). 
\end{lem}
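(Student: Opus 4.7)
The plan is to apply Lemma \ref{lem-2-1} to a family of equitable partitions of $EAG_n$, one for each choice of $i \in \{1,\ldots,n\}$. Since $EAG_n = Cay(A_n, T_2)$ and every element of $T_2$ is a $3$-cycle involving the symbol $1$, freezing which position contains a given symbol $i$ should interact uniformly with the generating set. Concretely, for fixed $i$ I take $\Pi_i : A_n = X_i(1) \cup X_i(2) \cup \cdots \cup X_i(n)$.

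The first step is to verify that $\Pi_i$ is equitable for $EAG_n$ and to write down its divisor matrix $B_{\Pi_i}$. For a vertex $\tau \in X_i(j)$, I count the number of $\sigma \in T_2$ with $\sigma\tau \in X_i(\ell)$, splitting into the cases $j = 1$, $\ell = 1$, $j = \ell \geq 2$, and $\{j,\ell\} \subseteq \{2,\ldots,n\}$ with $j \neq \ell$. The count should be independent of the chosen vertex (so $\Pi_i$ really is equitable) and independent of $i$, and should produce exactly the ``bordered'' matrix displayed in the excerpt: a zero in position $(1,1)$, $n-2$ along the remaining first row and column, $(n-2)(n-3)$ on the other diagonal entries, and $1$ on the remaining off-diagonal entries.

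The second step, which I expect to be the main technical point, is computing the spectrum of $B_{\Pi_i}$. Its trailing $(n-1) \times (n-1)$ block equals $\bigl((n-2)(n-3)-1\bigr) I_{n-1} + J_{n-1}$, giving eigenvalue $n^2 - 5n + 5$ with multiplicity $n-2$ on $\mathbf{1}^\perp$ and eigenvalue $(n-2)^2$ on $\mathbf{1}$. The remaining two eigenvalues come from restricting to the $2$-dimensional invariant subspace spanned by the first standard vector and the all-ones vector on the last $n-1$ coordinates, where a $2 \times 2$ characteristic equation yields $(n-1)(n-2)$ and $2-n$. This should match the factorization of the characteristic polynomial carried out in the text.

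Finally, Lemma \ref{lem-2-1}(i) shows that $(n-1)(n-2)$, $n^2 - 5n + 5$, and $2-n$ are eigenvalues of $A(EAG_n)$, with the stated multiplicity at least $n - 2$ for the middle one. For any eigenvalue $\lambda$ of $EAG_n$ outside this list, part (ii) forces every $\lambda$-eigenvector $f$ to lie in the orthogonal complement of the column space of $\chi_{\Pi_i}$, which is precisely the condition $\sum_{x \in X_i(j)} f(x) = 0$ for every $j$. Since $i \in \{1,\ldots,n\}$ was arbitrary and the same partition argument works for each $i$, the identity holds for all pairs $i,j$, yielding the ``moreover'' statement of the lemma.
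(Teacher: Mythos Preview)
Your proposal is correct and follows essentially the same approach as the paper: fix $i$, verify that $\Pi_i: A_n = X_i(1)\cup\cdots\cup X_i(n)$ is an equitable partition of $EAG_n$ with the displayed divisor matrix, compute the spectrum of $B_{\Pi_i}$, and apply Lemma~\ref{lem-2-1}. The only cosmetic difference is that you diagonalize $B_{\Pi_i}$ via the invariant decomposition $\mathbb{R}^n = \{(0,v):v\perp\mathbf{1}\}\oplus\mathrm{span}\{e_1,(0,\mathbf{1})\}$, whereas the paper computes the characteristic polynomial directly by row/column operations on the determinant; both yield the factorization $(\lambda-(n-1)(n-2))(\lambda-(n^2-5n+5))^{n-2}(\lambda+(n-2))$.
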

Now we give the second largest eigenvalue of $EAG_n$.
\begin{thm}\label{thm-2}
Let $EAG_n$ be the extended alternating group graph with $n\geq 3$. Then the two largest eigenvalues of $EAG_n$ are $\lambda_1(EAG_n)=(n-1)(n-2)$ and $\lambda_2(EAG_n)=n^2-5n+5$. 
\end{thm}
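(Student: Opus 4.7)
The plan is to mirror the structure of Theorem~\ref{thm-1}: induction on $n$, using the equitable partition from Lemma~\ref{lem-3-1} together with a copy of $EAG_{n-1}$ sitting inside $EAG_n$. The base case $n=3$ is direct since $EAG_3=K_3$ has spectrum $\{2,-1,-1\}$, matching $\lambda_1=(n-1)(n-2)=2$ and $\lambda_2=n^2-5n+5=-1$. For the inductive step I would assume $\lambda_2(EAG_{n-1})=n^2-7n+11$ and let $\lambda$ be an eigenvalue of $EAG_n$ other than $(n-1)(n-2)$, $n^2-5n+5$, and $2-n$, with eigenvector $f$. By Lemma~\ref{lem-3-1}, $\sum_{\tau\in X_i(j)}f(\tau)=0$ for every $i,j$.

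The structural ingredients to exploit are the following. For any fixed $i$ and any $j\in\{2,\ldots,n\}$ the induced subgraph $EAG_n[X_i(j)]$ is isomorphic to $EAG_{n-1}$, because the elements of $T_2$ that fix position $j$ form the analogous generating set on $A_{n-1}$. The block $X_i(1)$ is an independent set since no element of $T_2$ fixes position $1$. The bipartite graph between $X_i(j_0)$ (for $j_0\geq 2$) and $X_i(1)$ is $(n-2)$-regular on both sides, while the bipartite graph between $X_i(j_0)$ and $X_i(\ell)$ for $\ell\neq 1,j_0$ is a perfect matching. An averaging argument analogous to Claim~1 in Theorem~\ref{thm-1} provides an index $i$ and an index $j_0\in\{2,\ldots,n\}$ with $S_{j_0}(i):=\sum_{\tau\in X_i(j_0)}f(\tau)^2>0$, $S_{j_0}(i)\geq S_1(i)$, and $S_{j_0}(i)\geq S_\ell(i)$ for every $\ell\in\{2,\ldots,n\}$.

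With such $i,j_0$ fixed, apply the eigenvalue equation at each $\tau\in X_i(j_0)$, multiply by $f(\tau)$, and sum to obtain
\[
\lambda\,S_{j_0}=A_{j_0}+B_{j_0}+C_{j_0},
\]
where $A_{j_0}$ aggregates edges inside $X_i(j_0)$, $B_{j_0}$ the edges to $X_i(1)$, and $C_{j_0}$ the matching edges to the remaining blocks $X_i(\ell)$. Since $f|_{X_i(j_0)}\perp\mathbf{1}$, the inductive hypothesis applied to $EAG_n[X_i(j_0)]\cong EAG_{n-1}$ gives $A_{j_0}\leq(n^2-7n+11)S_{j_0}$. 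The biadjacency matrix $M$ between $X_i(j_0)$ and $X_i(1)$ satisfies $M^{T}M=(n-2)I+A(EAG_{n-1})$ (its off-diagonal $1$'s count common neighbors, which translate back to $3$-cycles on $\{2,\ldots,n\}$ through $j_0$), so by induction its second singular value is exactly $n-3$; combining with $f|_{X_i(1)}\perp\mathbf{1}$ yields $B_{j_0}\leq(n-3)\sqrt{S_{j_0}S_1}$. A Cauchy--Schwarz bound over the $n-2$ matchings together with $S_\ell\leq S_{j_0}$ controls $C_{j_0}\leq\sqrt{(n-2)S_{j_0}T}$, where $T:=\sum_{\ell\neq 1,j_0}S_\ell\leq(n-2)S_{j_0}$.

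The decisive step, and the main obstacle, is to show that $B_{j_0}+C_{j_0}\leq(2n-6)S_{j_0}$, giving $\lambda\leq(n^2-7n+11)+(2n-6)=n^2-5n+5$ and closing the induction. A term-by-term application of the estimates above yields only $(n-3)+(n-2)=2n-5$, one too many, so the sharper bound will require a more refined interaction between the three contributions: either a stronger claim that pins down an additional relation between $S_1$, $T$, and $S_{j_0}$, or exploiting that $f|_{X_i(j_0)}$, when transported through $X_i(j_0)\cong A_{n-1}$, satisfies all of Lemma~\ref{lem-3-1}'s sub-block sum-to-zero relations on $EAG_{n-1}$ (so $f|_{X_i(j_0)}$ lies in the span of the $\lambda_2(EAG_{n-1})$- and $(3-n)$-eigenspaces, pushing the inner product against $Mf|_{X_i(1)}$ strictly below the naive Cauchy--Schwarz bound). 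Once this refinement is in place, the two largest eigenvalues of $EAG_n$ are exactly $(n-1)(n-2)$ and $n^2-5n+5$, so the adjacency spectral gap equals $2n-3$.
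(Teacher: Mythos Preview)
Your proposal has a genuine gap that you yourself flag: the bounds you derive for $B_{j_0}$ and $C_{j_0}$ only give $B_{j_0}+C_{j_0}\leq(2n-5)S_{j_0}$, one more than the $(2n-6)S_{j_0}$ you need, and the two suggested fixes (a sharper averaging claim, or exploiting sub-block sum-to-zero relations inside $X_i(j_0)$) are left as speculation rather than carried out. In fact the second suggestion is problematic: knowing $\sum_{\tau\in X_i(j)}f(\tau)=0$ for all $j$ does not force $f|_{X_i(j_0)}$ to satisfy the analogous relations for the equitable partition of $EAG_{n-1}$, so you cannot conclude that $f|_{X_i(j_0)}$ avoids the $\lambda_1(EAG_{n-1})$-eigenspace of $M^TM$ beyond orthogonality to $\mathbf{1}$, which you already used.

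The paper sidesteps this difficulty entirely by a different, much cleaner decomposition. Rather than fixing one block $X_i(j_0)$ and analysing the cross-edges by hand, it partitions $A_n=X_1(2)\cup\cdots\cup X_n(2)$ and observes that the edge set of $EAG_n$ splits \emph{exactly} as
\[
E(EAG_n)=\Big(\bigcup_{i=1}^{n}E(EAG_n[X_i(2)])\Big)\ \cup\ E(AG_n),
\]
since the $3$-cycles in $T_2$ that move position $2$ are precisely $(1,2,k)$ and $(1,k,2)$, i.e.\ the generating set $T_1$ of $AG_n$. Writing the Rayleigh quotient as the sum of the two corresponding pieces, the first is bounded by $\lambda_2(EAG_{n-1})$ via the inductive hypothesis (using $\sum_{x\in X_i(2)}f(x)=0$), and the second is bounded by $\lambda_2(AG_n)=2n-6$ via Theorem~\ref{thm-1} (using only $f\perp\mathbf{1}$). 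This gives
\[
\lambda\leq(n^2-7n+11)+(2n-6)=n^2-5n+5
\]
with no loss. The point you are missing is that Theorem~\ref{thm-1} has already done the delicate work of controlling those cross-edges; you should invoke it as a black box rather than redo a Claim~1-style argument at the level of $EAG_n$.
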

\begin{proof}
Obviously, $\lambda_1(EAG_n)=(n-1)(n-2)$ because $EAG_n$ is $(n-1)(n-2)$-regular. Now we prove $\lambda_2(EAG_n)=n^2-5n+5$ by induction on $n$. For  $n=3$, we have $EAG_3=K_3$, which gives that $\lambda_2(EAG_3)=-1$, and the result follows. Now suppose $n\geq 4$ and assume that the result holds for $EAG_{n-1}$, i.e., $\lambda_2(EAG_{n-1})=(n-1)^2-5(n-1)+5$. Let $\lambda$ be an eigenvalue of $EAG_n$ that is not equal to $(n-1)(n-2)$, $n^2-5n+5$ or $2-n$, and let $f$ be any fixed eigenvector corresponding to $\lambda$.  Then $f$ must sum to zero on $X_i(j)$ for all $1\leq i,j\leq n$. Now we partition the vertex set of $EAG_n$ as $A_n=X_1(2)\cup X_2(2)\cup \cdots\cup X_n(2)$. As in the proof of Theorem  \ref{thm-1}, one can easily verify that the induce subgraph of $EAG_n$ on $X_i(2)$ is isomorphic to $EAG_{n-1}$ for each $i$ ($1\leq i\leq n$). Let $E_1=\cup_{i=1}^n E(EAG_n[X_i(2)])$ and $E_2=E(EAG_n)\setminus E_1$. We claim that $E_2$ is exactly the set of edges of the alternating group graph $AG_n=\mathrm{Cay}(A_n,T_1)$, where $T_1=\{(1,2,k),(1,k,2)\mid 3\leq k\leq n\}$.  In fact, for any  $\gamma\in X_i(2)$ and $\gamma'\in X_j(2)$ ($i\neq j$), we have $\{\gamma,\gamma'\}\in E(EAG_n)$ if and only if $\gamma'\gamma^{-1}\in T_2$ (i.e., $\gamma'\gamma^{-1}$ is a $3$-cycle moving $1$), which is the case if and only $\gamma'\gamma^{-1}\in T_1(\subseteq T_2)$ because $\gamma'\gamma^{-1}$ must move $2$ due to $i\neq j$. This implies that the edges in $E_2$ come from $T_1$. On the other hand, we see that $T_1$ can only be used to produce the edges in $E_2$ because each edge in $E_1$ comes from $T_2\setminus T_1$, i.e., the set of $3$-cycles in $T_2$ fixing $2$.  Then
\begin{eqnarray*}
\lambda&=&\frac{f^TA(EAG_n)f}{f^Tf}\\
&=&\frac{2\sum_{\{x,y\}\in E(EAG_n)}f(x)f(y)}{\sum_{x\in A_n}f(x)^2}\\
&=&\frac{2\sum_{\{x,y\}\in E_1}f(x)f(y)}{\sum_{x\in A_n}f(x)^2}+\frac{2\sum_{\{x,y\}\in E_2}f(x)f(y)}{\sum_{x\in A_n}f(x)^2}.
\end{eqnarray*}
For the first term, we have
\begin{eqnarray*}
\frac{2\sum_{\{x,y\}\in E_1}f(x)f(y)}{\sum_{x\in A_n}f(x)^2}&=&\frac{\sum_{i=1}^n2\sum_{\{x,y\}\in E(EAG_n[X_i(2)])}f(x)f(y)}{\sum_{i=1}^n\sum_{x\in X_i(2)}f(x)^2}\\
&\leq &\max_{1\leq i\leq n}\frac{2\sum_{\{x,y\}\in E(EAG_n[X_i(2)])}f(x)f(y)}{\sum_{x\in X_i(2)}f(x)^2}\\
&\leq &\lambda_2(EAG_{n-1}),
\end{eqnarray*}
where the last inequality follows from the fact $\sum_{x\in X_i(2)}f(x)=0$ for each $i$ according to Lemma \ref{lem-3-1}. For the second term, since $f$ is orthogonal to the all ones vector $\mathbf{1}$, we have 
$$
\frac{2\sum_{\{x,y\}\in E_2}f(x)f(y)}{\sum_{x\in A_n}f(x)^2}\leq\max_{g\perp\mathbf{1}}\frac{g^TA(AG_n)g}{g^Tg}=\lambda_2(AG_n).
$$
Combining above two bounds, we conclude that 
$$\lambda\leq \lambda_2(EAG_{n-1})+\lambda_2(AG_n)=(n-1)^2-5(n-1)+5+2n-6=n^2-5n+5$$
by Theorem \ref{thm-1}. Hence $\lambda_2(EAG_n)=n^2-5n+5$, and our result follows.
\end{proof}

Now we focus on the complete alternating group graph $CAG_n=\mathrm{Cay}(A_n,T_3)$ where $T_3=\{(i,j,k),(i,k,j)\mid 1\leq i<j<k\leq n\}$. Obviously, $CAG_n$ is a $2\binom{n}{3}$-regular graph. As above, we shall verify that $\Pi: A_n=X_i(1)\cup X_i(2)\cup \cdots\cup X_i(n)$ is also an equitable partition of $CAG_n$ for each fixed $i$, where $X_i(j)$ is defined in  (\ref{equ-3}).  For each fixed $j$ ($1\leq j\leq n$), let $\tau\in X_i(j)$. We have $N(\tau)\cap X_i(j)=\{(k,l,m)\tau,(k,m,l)\tau\mid1\leq k<l<m\leq n, k,l,m\neq j\}$, and $N(\tau)\cap X_i(\ell)=\{(\ell,j,k)\tau\mid 1\leq k\leq n, k\neq\ell, j\}$ for each $\ell\neq j$. This implies that $\tau$ has $2\binom{n-1}{3}$ neighbors in $X_i(j)$, and $n-2$ neighbors in $X_i(\ell)$ for each $\ell\neq j$. Thus $\Pi$ is an equitable partition of $CAG_n$ with divisor matrix
$$
B_\Pi=
\left[
\begin{matrix}
2\binom{n-1}{3}&n-2&n-2&\cdots&n-2\\
n-2&2\binom{n-1}{3}&n-2&\cdots&n-2\\
n-2&n-2&2\binom{n-1}{3}&\cdots&n-2\\
\vdots&\vdots&\vdots&\ddots&\vdots\\
n-2&n-2&n-2&\cdots&2\binom{n-1}{3}
\end{matrix}
\right]
\begin{matrix}
X_i(1)\\
X_i(2)\\
X_i(3)\\
\vdots\\
X_i(n)
\end{matrix}.
$$
It is easy to see that $B_\Pi$ has eigenvalues $2\binom{n}{3}$ and $\frac{1}{3}n(n-2)(n-4)$, where the first one is of  multiplicity $1$ having the all ones vector $\mathbf{1}$ as its eigenvector, and the second one is of multiplicity $n-1$ having each vector orthogonal to $\mathbf{1}$ as an eigenvector. Again by Lemma \ref{lem-2-1}, we have
\begin{lem}\label{lem-4-1}
Let $CAG_n$ be the complete alternating group graph with $n\geq 3$. Then $CAG_n$ has eigenvalues $2\binom{n}{3}$ and $\frac{1}{3}n(n-2)(n-4)$ (with multiplicity at least $n-1$). Moreover, if $f$ is an eigenvector of $CAG_n$ corresponding to any other eigenvalue, then for all $i,j\in\{1,2,\ldots,n\}$, we have $$\sum_{x\in X_i(j)}f(x)=0,$$ where $X_i(j)$ is defined in (\ref{equ-3}). 
\end{lem}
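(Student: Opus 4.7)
The author has already done essentially all the combinatorial work in the paragraph preceding the statement: for a fixed index $i$, the partition $\Pi:A_n=X_i(1)\cup\cdots\cup X_i(n)$ of $CAG_n$ is shown to be equitable with divisor matrix $B_\Pi$ having constant diagonal $2\binom{n-1}{3}$ and constant off-diagonal $n-2$. So my proof proposal reduces to two short tasks: extract the spectrum of $B_\Pi$, and feed the conclusion into Lemma~\ref{lem-2-1}.

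The key observation is that $B_\Pi$ has the structure of a scalar matrix plus a rank-one perturbation. Writing
$$B_\Pi=\Bigl(2\tbinom{n-1}{3}-(n-2)\Bigr)I_n+(n-2)J_n,$$
where $J_n$ is the $n\times n$ all-ones matrix, I can immediately read off the eigenvalues: $\mathbf{1}$ is an eigenvector with eigenvalue $2\binom{n-1}{3}-(n-2)+n(n-2)=2\binom{n-1}{3}+(n-1)(n-2)$, and the $(n-1)$-dimensional orthogonal complement $\mathbf{1}^\perp$ is the eigenspace for the eigenvalue $2\binom{n-1}{3}-(n-2)$. A short algebraic simplification gives
$$2\tbinom{n-1}{3}+(n-1)(n-2)=(n-1)(n-2)\Bigl(\tfrac{n-3}{3}+1\Bigr)=\tfrac{n(n-1)(n-2)}{3}=2\tbinom{n}{3},$$
and
$$2\tbinom{n-1}{3}-(n-2)=(n-2)\cdot\tfrac{(n-1)(n-3)-3}{3}=\tfrac{n(n-2)(n-4)}{3}.$$

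Having identified the spectrum of $B_\Pi$ as $\{2\binom{n}{3}\}\cup\{\tfrac{1}{3}n(n-2)(n-4)\}^{(n-1)}$, I invoke Lemma~\ref{lem-2-1}(i) to conclude that both values are eigenvalues of $A(CAG_n)$, with $\tfrac{1}{3}n(n-2)(n-4)$ of multiplicity at least $n-1$ (the multiplicity in $A(CAG_n)$ can only grow). For the second assertion, Lemma~\ref{lem-2-1}(ii) states that any eigenvector of $A(CAG_n)$ whose eigenvalue is not an eigenvalue of $B_\Pi$ must be orthogonal to the columns of $\chi_\Pi$, equivalently must sum to zero on each block $X_i(j)$. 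Since the index $i$ was arbitrary, the relation $\sum_{x\in X_i(j)}f(x)=0$ holds for every pair $i,j\in\{1,\dots,n\}$.

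Since all the difficult verifications (equitability, computing the divisor matrix) have been performed in the text before the statement, there is no real obstacle in the proof. The only point that deserves a brief remark is why the computed multiplicity $n-1$ of the divisor eigenvalue survives as a lower bound on the multiplicity in $A(CAG_n)$: this is because the columns of $\chi_\Pi$ are linearly independent, so the entire $(n-1)$-dimensional divisor eigenspace lifts injectively into the eigenspace of $A(CAG_n)$ via $\chi_\Pi$. This is precisely the content of part~(i) of Lemma~\ref{lem-2-1}, and it completes the proof.
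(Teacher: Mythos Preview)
Your proposal is correct and follows essentially the same route as the paper: the paper also notes that $B_\Pi$ has $\mathbf{1}$ as eigenvector for $2\binom{n}{3}$ and $\mathbf{1}^\perp$ as the eigenspace for $\tfrac{1}{3}n(n-2)(n-4)$, then invokes Lemma~\ref{lem-2-1}. Your explicit $aI_n+bJ_n$ decomposition and the accompanying algebra simply spell out what the paper summarizes as ``It is easy to see,'' and your remark on the injectivity of $\chi_\Pi$ makes the multiplicity lower bound more transparent than the paper does.
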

Now we prove the main result of this section.
\begin{thm}\label{thm-3}
Let $CAG_n$ be the complete alternating group graph with $n\geq 3$. Then the two largest eigenvalues of $CAG_n$ are $\lambda_1(CAG_n)=2\binom{n}{3}$ and $\lambda_2(CAG_n)=\frac{1}{3}n(n-2)(n-4)$. 
\end{thm}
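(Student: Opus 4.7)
The plan is to mirror the induction used in the proof of Theorem \ref{thm-2}. Regularity immediately gives $\lambda_1(CAG_n)=2\binom{n}{3}$, so the task reduces to showing $\lambda_2(CAG_n)=\tfrac{1}{3}n(n-2)(n-4)$. I would induct on $n$, with base case $n=3$ where $CAG_3=K_3$ has eigenvalues $2,-1,-1$ and the formula correctly yields $\tfrac{1}{3}\cdot 3\cdot 1\cdot(-1)=-1$. For the inductive step, I assume the identity for $n-1$ and let $\lambda$ be an eigenvalue of $CAG_n$ distinct from both $2\binom{n}{3}$ and $\tfrac{1}{3}n(n-2)(n-4)$, with corresponding eigenvector $f$. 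Then $f\perp\mathbf{1}$ (since $\lambda\neq\lambda_1$), and Lemma \ref{lem-4-1} guarantees that $f$ sums to zero on every block $X_i(j)$.

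The key structural step is to partition $A_n=X_1(2)\cup X_2(2)\cup\cdots\cup X_n(2)$ and decompose $E(CAG_n)=E_1\cup E_2$, where $E_1=\bigcup_{i=1}^n E(CAG_n[X_i(2)])$ collects the edges within blocks and $E_2$ collects the rest. Edges in $E_1$ are generated by precisely the $3$-cycles in $T_3$ that fix position $2$, i.e., the $3$-cycles on $\{1,3,4,\ldots,n\}$; so each $CAG_n[X_i(2)]$ is isomorphic to $CAG_{n-1}$, which can be verified by an explicit bijection in the style of the map $\phi$ from the proof of Theorem \ref{thm-1}. Edges in $E_2$ are generated by the $3$-cycles containing the symbol $2$; since conjugation by any element $\sigma\in A_n$ with $\sigma_1=2$ (such $\sigma$ exist for $n\geq 3$, e.g.\ $\sigma=(1,2,3)$) carries the $3$-cycles containing $1$ to those containing $2$, and since $Cay(A_n,T)\cong Cay(A_n,\sigma T\sigma^{-1})$ via $\tau\mapsto\sigma\tau\sigma^{-1}$, the graph $(A_n,E_2)$ is isomorphic to $EAG_n$.

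I would then write the Rayleigh quotient as
\begin{equation*}
\lambda=\frac{f^TA(CAG_n)f}{f^Tf}=\frac{2\sum_{\{x,y\}\in E_1}f(x)f(y)}{\sum_{x\in A_n}f(x)^2}+\frac{2\sum_{\{x,y\}\in E_2}f(x)f(y)}{\sum_{x\in A_n}f(x)^2}.
\end{equation*}
Exactly as in the proof of Theorem \ref{thm-2}, the first term is bounded by the maximum over $i$ of the Rayleigh quotient of $f|_{X_i(2)}$ on $CAG_n[X_i(2)]\cong CAG_{n-1}$; since $f|_{X_i(2)}\perp\mathbf{1}$ by Lemma \ref{lem-4-1}, the induction hypothesis yields the upper bound $\lambda_2(CAG_{n-1})=\tfrac{1}{3}(n-1)(n-3)(n-5)$. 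The second term is at most $\lambda_2(EAG_n)=n^2-5n+5$ by Theorem \ref{thm-2}, because $f\perp\mathbf{1}$. Adding these two bounds and simplifying gives
\begin{equation*}
\lambda\leq \tfrac{1}{3}(n-1)(n-3)(n-5)+n^2-5n+5=\tfrac{1}{3}n(n-2)(n-4),
\end{equation*}
so that $\lambda_2(CAG_n)=\tfrac{1}{3}n(n-2)(n-4)$ and the spectral gap equals $2\binom{n}{3}-\tfrac{1}{3}n(n-2)(n-4)=n^2-2n$.

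The main obstacle is organizational rather than conceptual: cleanly verifying the two isomorphisms $CAG_n[X_i(2)]\cong CAG_{n-1}$ and $(A_n,E_2)\cong EAG_n$ under the paper's multiplication convention, so that both the inductive hypothesis and Theorem \ref{thm-2} apply as Rayleigh-quotient bounds on the same eigenvector $f$. Once this bookkeeping is in place, the displayed numerical identity is the only arithmetic check required.
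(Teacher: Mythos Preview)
Your proposal is correct and follows essentially the same inductive Rayleigh-quotient argument as the paper. The only cosmetic difference is that the paper partitions by position $1$ rather than position $2$: with $A_n=X_1(1)\cup\cdots\cup X_n(1)$ the cross-block edges are \emph{exactly} $E(EAG_n)$ (since $T_2$ consists of the $3$-cycles moving position $1$), so the conjugation step you introduce to identify $(A_n,E_2)$ with $EAG_n$ becomes unnecessary.
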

\begin{proof}
Clearly, we have $\lambda_1(CAG_n)=2\binom{n}{3}$ because $CAG_n$ is $2\binom{n}{3}$-regular. We shall prove $\lambda_2(CAG_n)=\frac{1}{3}n(n-2)(n-4)$ by induction on $n$. For $n=3$, we have $\lambda_2(CAG_3)=\lambda_2(K_3)=-1$, as required. Now suppose $n\geq 4$. Assume that the result holds for $n-1$, i.e., $\lambda_2(CAG_{n-1})=\frac{1}{3}(n-1)(n-3)(n-5)$. Let $\lambda$ be any eigenvalue of $CAG_n$ that is not equal to $2\binom{n}{3}$ or $\frac{1}{3}n(n-2)(n-4)$, and pick any eigenvector $f$ of $CAG_n$ corresponding to $\lambda$. Again, the vector $f$ must sum to zero on $X_i(j)$ for all $i,j$ according to Lemma \ref{lem-4-1}. We partition the vertex set of $CAG_n$ as $A_n=X_1(1)\cup X_2(1)\cup \cdots\cup X_n(1)$. Let $E_1=\{\{\tau,\sigma\}\in E(CAG_n)\mid \tau_1\neq \sigma_1\}$ and $E_2^i=\{\{\tau,\sigma\}\in E(CAG_n)\mid \tau_1=\sigma_1=i\}$ for $1\leq i\leq n$. Then $E_1\cup E_2^1\cup E_2^2\cup \cdots\cup E_2^n$ is a partition of $E(CAG_n)$. As in the proof of Theorem \ref{thm-2}, we see that  $E_1$ is exactly the set of edges of the extended alternating group graph $EAG_n=\mathrm{Cay}(A_n,T_2)$ with $T_2=\{(1,k,l),(1,l,k)\mid 2\leq k<l\leq n\}$, and for each $i$, $E_2^i$ is exactly the set of edges of the induced subgraph of $CAG_n$ on $X_i(1)$ which is also isomorphic to  $CAG_{n-1}$. Then we have
\begin{eqnarray*}
\lambda&=&\frac{f^TA(CAG_n)f}{f^Tf}\\
&=&\frac{2\sum_{\{x,y\}\in E(CAG_n)}f(x)f(y)}{\sum_{x\in A_n}f(x)^2}\\
&=&\frac{2\sum_{\{x,y\}\in E_1}f(x)f(y)}{\sum_{x\in A_n}f(x)^2}+\frac{\sum_{i=1}^n2\sum_{\{x,y\}\in E_2^i}f(x)f(y)}{\sum_{x\in A_n}f(x)^2}.
\end{eqnarray*}
For the first term, since $f$ is orthogonal to the all ones vector $\mathbf{1}$, we have 
$$
\frac{2\sum_{\{x,y\}\in E_1}f(x)f(y)}{\sum_{x\in A_n}f(x)^2}\leq\max_{g\perp\mathbf{1}}\frac{g^TA(EAG_n)g}{g^Tg}=\lambda_2(EAG_n).
$$
For the second term, we have
\begin{eqnarray*}
\frac{\sum_{i=1}^n2\sum_{\{x,y\}\in E_2^i}f(x)f(y)}{\sum_{x\in A_n}f(x)^2}&=&\frac{\sum_{i=1}^n2\sum_{\{x,y\}\in E(CAG_n[X_i(1)])}f(x)f(y)}{\sum_{i=1}^n\sum_{x\in X_i(1)}f(x)^2}\\
&\leq &\max_{1\leq i\leq n}\frac{2\sum_{\{x,y\}\in E(CAG_n[X_i(1)])}f(x)f(y)}{\sum_{x\in X_i(1)}f(x)^2}\\
&\leq &\lambda_2(CAG_{n-1}),
\end{eqnarray*}
where the last inequality follows from the fact $\sum_{x\in X_i(1)}f(x)=0$ for each $i$ according to Lemma \ref{lem-3-1}.  Combining above two bounds, we conclude that 
$$\lambda\leq \lambda_2(EAG_{n})+\lambda_2(CAG_{n-1})=n^2-5n+5+\frac{1}{3}(n-1)(n-3)(n-5)=\frac{1}{3}n(n-2)(n-4)$$
by Theorem \ref{thm-2}. Hence $\lambda_2(EAG_n)=\frac{1}{3}n(n-2)(n-4)$, as required.
\end{proof}

At the end of this paper, we pose the following problem.

\begin{prob}\label{prob-1}
How to determine all distinct eigenvalues (with multiplicities) of $AG_n$, $EAG_n$ and $CAG_n$?
\end{prob}

\section*{Acknowledgments}
The authors are grateful to the anonymous referees for their useful and constructive comments, which have considerably improved the presentation of this paper.

\end{document}